\documentclass[11pt]{amsart}
\usepackage[T1]{fontenc}
\usepackage[utf8]{inputenc}
\usepackage{lmodern}
\usepackage{amsmath,amssymb,amsthm}
\usepackage[letterpaper,margin=28mm]{geometry}
\usepackage{microtype}
\usepackage[dvipsnames]{xcolor}
\definecolor{annotationblue}{RGB}{0,0,200}

\usepackage[hidelinks]{hyperref}

\newtheorem{proposition}{Proposition}
\newtheorem{theorem}{Theorem}
\title{Quotients of $L_1$ by subsequences of the Haar system have cotype 2}
\author[S.J.~Dilworth]{Stephen Dilworth}
\address{S. J. Dilworth, Department of Mathematics, University of South Carolina, Columbia, SC 29208. USA}
\email{dilworth@math.sc.edu}

\author[D.~Kutzarova]{Denka Kutzarova}
\address{Denka Kutzarova, Department of Mathematics, University of Illinois Urbana-Champaign,
Urbana, IL 61807, USA; Institute of Mathematics and Informatics, Bulgarian Academy of Sciences, Sofia, Bulgaria}
\email{denka@illinois.edu}

\author[M.~Ostrovskii]{Mikhail Ostrovskii}
\address{Mikhail Ostrovskii, Department of
Mathematics and Computer Science, St. John's University, 8000 Utopia Parkway, Queens, NY 11439, USA} 
\email{ostrovsm@stjohns.edu}

\author[Th.~Speckhofer]{Thomas Speckhofer}%
\address{Thomas Speckhofer, Department of Mathematics, Texas A\&M University, College Station, TX~77843, USA}
\email{speckhofer@tamu.edu}

\date\today

\subjclass[2020]{%
  Primary 46E30; 
  Secondary 46B03, 
  46B25, 
  46B15, 51F30. 
}

\keywords{Lebesgue space, Haar system, quotient space, cotype, transportation cost space}

\thanks{The first author was supported by Simons Foundation Collaboration Grant No. 849142.  The second author was supported by Simons Foundation Collaboration Grant No. 636954. The fourth author was supported by the Austrian Science Fund~(FWF), project 10.55776/J5020.}

\newcommand{\tc}{{\rm TC}\hskip0.02cm}
\def \ep {\varepsilon}
\def \R {\mathbb{R}}

\def \N {\mathbb{N}}

\begin{document}

\begin{abstract}
     Let $Y$ be the closed linear span of a subsequence of the Haar system in $L_1$. We prove that the quotient space $L_1/Y$ has cotype~$2$. This result implies that transportation cost spaces on diamond graphs have cotype $2$.
\end{abstract}

\maketitle

\noindent\textbf{AI usage statement.}
The proofs in this paper were generated by OpenAI’s ChatGPT 6.0 Astra during a chat initiated by the authors. The model was prompted to determine the above cotype using different approaches suggested by the authors. In particular, the authors suggested following the the proof of Bourgain's Theorem on~$L_1/H^1$, as presented in Pisier~\cite{MR829919}. The authors have checked the mathematical correctness of results and proofs and edited them for clarity, presentation, and context.

\section{Introduction}

The main reason for our interest in the quotient spaces of $L_1$ by subsequences of Haar functions is that some of them are isometric to transportation cost spaces on the well-known diamond graphs (see \cite{MR4098600}). We denote a transportation cost space on a metric space $X$ by $\tc(X)$. The study of the cotype of transportation cost spaces was initiated by Bourgain \cite{MR880292}. He proved that $\{\tc(H_n)\}$, where $\{H_n\}_{n=1}^\infty$ are Hamming cubes, have no cotype in the sense that for each $d\in\N$ and $\ep>0$ there is $n\in\N$ such that $\ell_\infty^d$ is $(1+\ep)$-isometric to a subspace of $\tc(H_n)$. Bourgain's motivation for this result was that it showed that the suggestion to introduce the cotype of a metric space $X$ as the (Banach-space-theoretical) cotype of $\tc(X)$ does not seem suitable because, in this sense, the space $\ell_1$ has no cotype. Bourgain also noted that determining the cotype of $\tc(\R^2)$ is significantly more difficult (this problem remains open; see the discussion in Naor \cite{MR3966745}). Note that it would suffice to solve a version of this problem for square plane grids.

The interest in the result of Bourgain on $\tc(H_n)$ increased after computations with transportation cost became important for Computer Vision \cite{IT03}, where the precision of the results is sometimes sacrificed for the sake of speed. One way to achieve fast approximate computations of transportation cost is based on low-distortion embeddings of the corresponding spaces $\tc(X)$ into $L_1$. Bourgain's result on $\{\tc(H_n)\}_{n=1}^\infty$ shows that there is a cotype obstruction for their low-distortion embeddings into $L_1$. It became important to determine for which of the well-known metric spaces $X$, cotype is an obstruction for the low-distortion embedding of $\tc(X)$
into $L_1$. 

Our results show that diamond graphs $\{D_n\}_{n=1}^\infty$ provide the first known example of a family of metric spaces for which the $L_1$-distortion grows indefinitely as $n\to\infty$ (see \cite{BGS23}), and even this growth has the largest possible order \cite{GO26}, but the cotype is not an obstacle for their embeddings into $L_1$.

The study of the cotype properties of quotients of $L_1$ has a long history. It started with the results of Kislyakov \cite{Kis76} and Pisier \cite{Pis78}. It was continued with contributions of Bourgain \cite{Bou84} and Bourgain-Davis \cite{MR825718}. In this paper, we use a version of the Bourgain-Davis result (Theorem \ref{thm:bourgain-davis}).

Another starting point for this study was the recent result of \cite{speckhofer:even-levels}.
\bigskip

Let $(h_I)_{I\in \mathcal{D}}$ denote the Haar system, where $\mathcal{D}$ is the set of all dyadic intervals in~$[0,1]$. Our main result is the following.
\begin{theorem}\label{thm:main-intro}
  Let $\mathcal{A}$ be a subcollection of the dyadic intervals~$\mathcal{D}$, and let $Y$ denote the closed linear span of $\{ h_I : I\in \mathcal{A} \}$ in $L_1$. Then the quotient space $L_1/Y$ has cotype~$2$.
\end{theorem}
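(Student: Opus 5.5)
We follow the route of Bourgain's theorem on $L_1/H^1$ as presented by Pisier, with the Bourgain--Davis criterion (Theorem~\ref{thm:bourgain-davis}) as the engine. Roughly, that criterion says the following. Suppose $Y\subset L_1$ is a closed subspace such that there is a bounded operator (or family of uniformly bounded operators) on $L_1$ acting like a ``projection onto $Y$ which is bounded from $L_2$ to $L_2$ and from $L_1$ to weak-$L_1$ or $L_1$'' with a good approximation property. Then $L_1/Y$ has cotype~$2$. More concretely, it suffices to establish a \emph{lifting} estimate. For every finite family $x_1,\dots,x_n\in L_1/Y$ we want representatives $f_i\in L_1$ with $\|f_i\|_1\le C\|x_i\|$ and
\[
\Bigl\|\Bigl(\sum_i |f_i|^2\Bigr)^{1/2}\Bigr\|_1\le C\,\mathbb{E}\Bigl\|\sum_i \varepsilon_i x_i\Bigr\|_{L_1/Y}.
\]
Since $L_1$ has cotype $2$ through the square function, this gives cotype~$2$ of the quotient.

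The natural operator is the Haar projection $P_{\mathcal A}f=\sum_{I\in\mathcal A}\langle f,h_I\rangle h_I|I|^{-1}$. It is an orthogonal projection on $L_2$. By the martingale transform / Burkholder--Gundy theory (unconditionality of the Haar system in $L_p$, $1<p<\infty$, and Davis' weak type $(1,1)$ estimate for martingale transforms), it is bounded $L_p\to L_p$ for $1<p<\infty$ and of weak type $(1,1)$. It is also bounded from $H^1_{\text{dyadic}}$ to $H^1_{\text{dyadic}}$ and from $L_\infty$ to dyadic BMO. The Bourgain--Davis version presumably requires precisely these properties: a projection onto (the $L_2$ closure of) $Y$ that is weak-$(1,1)$ and commutes appropriately with the dyadic filtration. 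The steps are as follows.

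First, verify that $Y$ equals the kernel-type object needed, namely that $Y=\overline{P_{\mathcal A}(L_1\cap L_2)}^{L_1}$. Equivalently, $Y^\perp\subset L_\infty$ consists of the $g$ with $P_{\mathcal A}g=0$, and $L_1/Y$ is dual-normed by $\{g\in L_\infty: \langle g,h_I\rangle=0,\ I\in\mathcal A\}$.

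Second, prove the key decomposition: every $f\in L_1$ with $\|f+Y\|\le1$ admits, for a parameter $\lambda$, a splitting $f=y+g+b$ with $y\in Y$, $\|g\|_2^2\lesssim\lambda$ and $\|b\|_1\lesssim 1$ with good control. This is a Calderón--Zygmund decomposition along the dyadic martingale. The stopping time is chosen so that the bad part is a sum of functions supported on disjoint dyadic intervals with mean zero, and the projection $I-P_{\mathcal A}$ respects the supports of atoms. This is where the subsequence structure matters: $P_{\mathcal A}$ maps a function supported on a dyadic $J$ with mean zero to one supported on $J$. Hence $(I-P_{\mathcal A})$ acts locally on the bad part, which is what makes the Haar case tractable in contrast with the $H^1$ case.

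Third, feed this into the Bourgain--Davis/Pisier argument. Take finitely many $x_i$, choose good lifts via the $L_2$-orthogonal projection $I-P_{\mathcal A}$ (after truncation using the decomposition), and estimate the square function. Use the weak-$(1,1)$ bound together with a Marcinkiewicz interpolation / extrapolation to pass from the $L_2$ estimate (trivial by orthogonality) to an $L_1$ estimate of $\bigl(\sum|f_i|^2\bigr)^{1/2}$. Here vector-valued ($\ell_2$-valued) weak type $(1,1)$ of martingale transforms is needed; it holds because the dyadic Calderón--Zygmund theory is Hilbert-space valued. The main obstacle I expect is this $L_1$ lifting. The quotient norm is an infimum, so a lift by $(I-P_{\mathcal A})$ is not bounded on $L_1$. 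One must use the stopping-time decomposition to show that the lift is nearly optimal outside a small exceptional set, where it is controlled in weak-$L_1$, and then upgrade weak-$L_1$ to the required $L_1$ inequality. I would first try the approach in Pisier's proof: show the estimate in $L_{1,\infty}$-quasinorm (or $L_p$ for $p<1$), which suffices for cotype by Kahane's inequality and Maurey's factorization.
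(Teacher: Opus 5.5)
There is a genuine gap. Your reduction to finding lifts $f_j\in x_j+Y$ with $\int(\sum_j|f_j|^2)^{1/2}\lesssim\mathbb{E}_\varepsilon\|\sum_j\varepsilon_jx_j+Y\|$ is the right one; it is exactly what the paper proves. But producing these lifts, which you flag as the main obstacle, is the whole proof, and the mechanism you propose cannot work.

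\emph{The fixed projection fails.} Any lift defined by the fixed linear map $I-P_{\mathcal A}$ fails already for $n=1$. It would require $\|(I-P_{\mathcal A})x\|_1\lesssim\|x+Y\|\le\|x\|_1$, i.e.\ $L_1$-boundedness of $P_{\mathcal A}$, which is false in general: for $\mathcal A=\{[0,2^{-k}):k\text{ even}\}$ and $x=2^m\mathbf{1}_{[0,2^{-m})}$ one gets $\|P_{\mathcal A}x\|_1\gtrsim m$.

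\emph{Truncation and interpolation do not repair it.} A Calder\'on--Zygmund truncation is nonlinear. It therefore breaks the requirement that $\sum_j\varepsilon_jf_j$ lift $\sum_j\varepsilon_jx_j$ simultaneously for all $2^n$ sign vectors. The locality of $P_{\mathcal A}$ on mean-zero pieces yields only weak type $(1,1)$. Marcinkiewicz interpolation between weak $(1,1)$ and $L_2$ gives $L_p$ bounds for $1<p<2$, never an $L_1$ bound for $L_1$ data. An $L_{1,\infty}$ or $L_p$ ($p<1$) bound in the $t$-variable gives no control of $\|f_j\|_1$, which is what the quotient norm needs, and the appeal to Kahane and Maurey is not backed by any argument.

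\emph{The Bourgain--Davis theorem is misidentified.} It is not a criterion about projections onto $Y$. It is the mixed-norm inequality $\|Th\|_{L_2(\Omega,L_p(S))}\le C_p\|h\|_{L_2(\Omega,L_1(S))}$, $0<p<1$, for martingale transforms, with $C_p$ independent of the underlying probability measure.

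The missing idea is a linear lifting map chosen depending on the data; it is the dyadic counterpart of the outer function in Pisier's proof for $L_1/H^1$.
\begin{itemize}
\item The paper takes optimal representatives $F_\varepsilon$ of all signed sums, sets $w=\mathbb{E}_\varepsilon|F_\varepsilon|+\eta$, and puts $z_j=P_wx_j$. Here $P_w$ is the orthogonal projection onto $Y^{\perp_w}$ for $\langle f,g\rangle_w=\int fg/w$. Since $P_w$ kills $Y$, $P_wF_\varepsilon=\sum_j\varepsilon_jz_j$ for every $\varepsilon$.
\item The Haar structure enters through the identity $P_wf=b\,T_\nu(f/b)$, where $b=P_w\mathbf{1}_{[0,1)}>0$, $d\nu=(b^2/w)\,dt$, and $T_\nu$ is a dyadic martingale transform with $\{0,1\}$-valued multipliers for $\nu$. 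This is proved via the recursion for $b_I$ and the orthogonal basis $k_I=b_{I^+}-b_{I^-}$ of $Y$.
\item Bourgain--Davis with $p=1/2$, uniform in $\nu$, then gives $\int(\mathbb{E}_\varepsilon|\sum_j\varepsilon_jz_j|^{1/2})^4\,dt/w\le B^2\int(\mathbb{E}_\varepsilon|F_\varepsilon|)^2\,dt/w\le B^2A$.
\item Cauchy--Schwarz against $\int w=A+\eta$, together with Khintchine at exponent $1/2$, yields $\int(\sum_j|z_j|^2)^{1/2}\le K_{1/2}B\sqrt{A(A+\eta)}$.
\end{itemize}
The exponent below $1$ is used only in the sign variable. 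The $t$-variable stays in $L_2(dt/w)$ and is converted to $L_1$ by the choice of $w$. This is what produces a genuine $L_1$ estimate, which no weak-type argument with the unweighted projection can supply.
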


Before proving Theorem~\ref{thm:main-intro}, we establish some notation.
Let $\mathbb{N}_0 = \mathbb{N}\cup \{ 0 \}$. We denote the collection of dyadic intervals in $[0,1)$ by
\begin{equation*}
\mathcal{D} = \Big\{\Big[\frac{i-1}{2^n},\frac{i}{2^n}\Big):1\leq i\leq 2^n,\ n\in \mathbb{N}_0\Big\}.
\end{equation*}
Moreover, for $N\in \mathbb{N}_0$, we define
\begin{equation*}
  \mathcal{D}_N = \{ I\in \mathcal{D} : |I| \ge 2^{-N} \}.
\end{equation*}
 For each dyadic interval $I\in \mathcal{D}$, let~$I^+$ denote the left half of~$I$ and~$I^-$ its right half (both are again elements of $\mathcal{D}$).
The Haar system $(h_I)_{I\in\mathcal{D}}$ is defined as
\begin{equation*}
  h_I
  = \mathbf{1}_{I^+} - \mathbf{1}_{I^-},
  \qquad I\in\mathcal{D}.
\end{equation*}
Together with the constant function $\mathbf{1}_{[0,1)}$, the Haar system in its usual lexicographic order is a monotone Schauder basis
of $L_1$. For $N\in \mathbb{N}_0$, we put $L_1^N = \operatorname{span}(\{ \mathbf{1}_{[0,1)} \}\cup \{ h_I : I\in \mathcal{D}_N \})$.
We use $\int f$ and $\int f\,dt$ for integrals with respect to the Lebesgue measure on $[0,1]$.

By density, our main result (Theorem~\ref{thm:main-intro}) follows from the following finite-dimensional theorem.

\begin{theorem}\label{thm:main}
There is an absolute constant $C$ such that, for every $N\geq0$, every
collection $\mathcal{A}\subset \mathcal{D}_N$, every $n\geq1$, and all
$x_1,\ldots,x_n\in L_1^N$,
\begin{equation}\label{eq:main}
 \biggl(\sum_{j=1}^n\left\|x_j+Y\right\|_{L_1/Y}^2\biggr)^{1/2}
 \leq C\mathbb{E}_\varepsilon\biggl\|\sum_{j=1}^n\varepsilon_jx_j+Y\biggr\|_{L_1/Y},
\end{equation}
where $Y = \operatorname{span}\{ h_I : I\in \mathcal{A} \}$ and $\mathbb{E}_\varepsilon$ is the average over all sign vectors $\varepsilon\in \{ -1,1 \}^n$.
\end{theorem}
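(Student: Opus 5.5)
Following the route suggested by Bourgain's theorem on $L_1/H^1$ as presented in Pisier, I would dualize the quotient norm and reduce \eqref{eq:main} to a Bourgain--Davis type inequality (Theorem~\ref{thm:bourgain-davis}). The dual of $L_1/Y$ is $Y^\perp\subset L_\infty$, which (inside $L_1^N$) consists of functions whose Haar coefficients vanish on $\mathcal{A}$. For each $j$ I choose a norming functional $g_j\in Y^\perp$ with $\|g_j\|_\infty\le 1$ and $\langle x_j,g_j\rangle=\|x_j+Y\|$. The task is then to build, for scalars $a_j\ge0$ with $\sum a_j^2=1$, a single random functional $G_\varepsilon\in Y^\perp$ with $\mathbb{E}\|G_\varepsilon\|_\infty\lesssim 1$ (better: $\sup_\varepsilon\|G_\varepsilon\|_\infty\le C$) and $\mathbb{E}_\varepsilon\langle \sum\varepsilon_j x_j, G_\varepsilon\rangle\gtrsim\sum a_j\|x_j+Y\|$. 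Choosing $a_j\propto\|x_j+Y\|$ then gives \eqref{eq:main}.

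A naive choice such as $\sum\varepsilon_j a_j g_j$ fails, since its sup norm can be as large as $\sum a_j$. As in Pisier's proof of Bourgain's theorem, I would instead use a \emph{nonlinear multiplicative} construction. Put $\varphi=\bigl(\sum a_j^2|g_j|^2\bigr)^{1/2}\le1$. The standard trick writes $\varepsilon$-dependent functionals like $\sum_j\varepsilon_j a_j g_j\,\Psi_j$, where $\Psi_j$ is a bounded ``outer function'' correction that keeps the total uniformly bounded; in the $H^1$ setting this uses $e^{-(\varphi+i\tilde\varphi)}$. The Haar analogue of analyticity is a martingale-transform structure. I would use the stopping-time/square-function decomposition along the dyadic filtration, and replace the conjugate function by the martingale transform that keeps only the $\mathcal{D}\setminus\mathcal{A}$ coordinates. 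Concretely, I would apply Theorem~\ref{thm:bourgain-davis} in the form: for $f$ in the $L_1$-closure and the projection $P$ killing $\mathcal{A}$-coefficients, the quotient $L_1/Y$ is isomorphic to a space in which a square-function (or $H^1$-martingale) norm controls Rademacher averages. Cotype $2$ then follows from $\mathbb{E}\|\sum\varepsilon_j x_j\|\ge c\|(\sum|x_j|^2)^{1/2}\|$-type estimates for lattice-like quantities.

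So the ordered steps are as follows. (1) Reduce to the finite-dimensional duality statement. (2) Show that the dyadic martingale difference projection onto $\mathcal{A}$ is adapted, and compute its action on $L_\infty$ via the bounded dyadic BMO/$L_\infty$ correction. (3) Apply Theorem~\ref{thm:bourgain-davis} to lift quotient-norm estimates to estimates on representatives $f_j\in x_j+Y$ with $\|f_j\|_1\lesssim\|x_j+Y\|$ that are stable under signs, in the sense that $\mathbb{E}\|\sum\varepsilon_jx_j+Y\|\gtrsim\mathbb{E}\|\sum\varepsilon_j \tilde f_j\|$ for suitable representatives. (4) Conclude using cotype $2$ of $L_1$ for these representatives.

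The main obstacle is step (3): producing representatives (equivalently, uniformly bounded $Y^\perp$ functionals) whose bounds do not depend on $\mathcal{A}$. Unlike $H^1$, an arbitrary subcollection has no product (algebra) structure, so the multiplicative $e^{-\varphi}$ trick does not directly preserve $Y^\perp$. I would first try to replace multiplication by a Gamlen--Gaudet/Garnett--Jones type dyadic construction. This would take a stopping-time family of intervals where $\varphi$ jumps, and define the correction as a product of conditional expectations, which lies in $Y^\perp$ after a harmless projection. I would then verify its boundedness using the Bourgain--Davis estimate.
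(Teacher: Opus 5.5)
Your reduction is fine as far as it goes. The dual reformulation via functionals $G_\varepsilon\in Y^\perp$ is equivalent to \eqref{eq:main}. The plan in steps (3)--(4) is also right in substance: find representatives $z_j\in x_j+Y$, chosen independently of $\varepsilon$, with $\mathbb{E}_\varepsilon\|\sum_j\varepsilon_jz_j\|_1\lesssim\mathbb{E}_\varepsilon\|\sum_j\varepsilon_jx_j+Y\|$, and then use cotype $2$ of $L_1$. The paper's $L_1(L_{1/2})$ bound together with pointwise Khintchine yields exactly this.

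The gap is that you never construct the representatives or the functionals; you name this as the main obstacle and leave it open. The suggested fixes do not work as stated. Projecting onto $Y^\perp$ (killing the $\mathcal{A}$-Haar coefficients) is a $\{0,1\}$-martingale transform. Its norm in the sup norm on $L_1^N$ is not bounded uniformly in $N$ and $\mathcal{A}$, since the Haar system is not unconditional in $L_1$ or $L_\infty$. So this projection is precisely where uniform boundedness is lost, not a harmless step. Likewise, applying Theorem~\ref{thm:bourgain-davis} to this unweighted projection with Lebesgue measure only gives an $L_2(dt;L_p)$ mixed-norm bound, which does not control $L_1$ quotient norms. The theorem also gives no isomorphism of $L_1/Y$ with a square-function space.

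The missing idea is to let the representatives depend on the given family through a weight. Take optimal lifts $F_\varepsilon=\sum_j\varepsilon_jx_j+y_\varepsilon$, set $w=\mathbb{E}_\varepsilon|F_\varepsilon|+\eta$, and define $z_j=P_wx_j$. Here $P_w$ is the orthogonal projection onto $Y^{\perp_w}$ for $\langle f,g\rangle_w=\int fg/w$. By linearity $P_wF_\varepsilon=\sum_j\varepsilon_jz_j$, so the $\varepsilon$-dependent $y_\varepsilon$ disappear.

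The key lemma is that $P_wf=b\,T_\nu(f/b)$, where $b=P_w\mathbf{1}_{[0,1)}>0$, $d\nu=(b^2/w)\,dt$, and $T_\nu$ is a $\{0,1\}$-martingale transform for the dyadic filtration under $\nu$. This comes from the recursion for $b_I=\mathbf{1}_I-P_{Y_I}\mathbf{1}_I$, which shows that $b>0$ and that each $k_I/b$ is a multiple of the $\nu$-Haar function $h_I^\nu$. Bourgain--Davis with $p=1/2$ then gives $\int(\mathbb{E}_\varepsilon|\sum_j\varepsilon_jz_j|^{1/2})^4\,dt/w\le B^2\int(\mathbb{E}_\varepsilon|F_\varepsilon|)^2\,dt/w\le B^2A$. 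Cauchy--Schwarz against $\int w=A+\eta$ returns this to $L_1(dt)$, and Khintchine at $p=1/2$ finishes.

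This change of measure is the Haar substitute for the outer-function multiplication you were looking for. It needs no algebra structure of $Y^\perp$.
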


\section{A weighted orthogonal projection}

To prove Theorem~\ref{thm:main}, we choose representatives $z_j$ of the classes $x_j+Y$ using a weighted orthogonal projection. Thus, for every signed sum $\sum_{j=1}^n\varepsilon_jx_j$ in~\eqref{eq:main}, we obtain a representative by the same linear rule, independent of~$\varepsilon$. After a change of measure, this projection becomes a martingale transform, and thus, we can apply the Bourgain--Davis inequality (Theorem~\ref{thm:bourgain-davis}) to get a mixed-norm inequality (Proposition~\ref{prop:weighted}). This inequality will be used in Section~\ref{sec:proof-of-main-result} to complete the proof of~\eqref{eq:main}.

Fix $N\in\mathbb{N}_0$, a collection $\mathcal{A}\subset\mathcal{D}_N$, and let $Y = \operatorname{span}\{ h_I : I\in \mathcal{A} \}$. For a strictly positive function $w\in L_1^N$, we will use
the inner product
\begin{equation}\label{eq:inner-product}
 \langle f,g\rangle_w=\int fg/w.
\end{equation}
Let $P_w$ be the orthogonal projection from $L_1^N$ onto $Y^{\perp_w}$ with respect to the above inner product, so
\begin{equation}\label{eq:P}
 P_w\text{ is linear},\qquad P_wy=0 \text{ for }y\in Y,\qquad P_wf-f\in Y \text{ for }f\in L_1^N.
\end{equation}

\begin{proposition}\label{prop:weighted}
For every $n\geq1$ and functions $f_1,\ldots,f_n\in L_1^N$,
\begin{equation}\label{eq:weighted}
 \int\biggl(\frac1n\sum_{i=1}^n|P_wf_i|^{1/2}\biggr)^4\,\frac{dt}{w}
 \leq B^2\int\biggl(\frac1n\sum_{i=1}^n|f_i|\biggr)^2\,\frac{dt}{w},
\end{equation}
with the absolute constant $B = C_{1/2}$ from Theorem~\ref{thm:bourgain-davis}.
\end{proposition}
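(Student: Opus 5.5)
The plan is to realize $P_w$, after a change of measure, as a martingale transform, and then apply Theorem~\ref{thm:bourgain-davis} with exponent $p=1/2$ to the vector $(f_1,\dots,f_n)$.

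First I rewrite \eqref{eq:weighted} with respect to the measure $d\mu = dt/w$. This measure is finite and has a strictly positive, $\mathcal{F}_N$-measurable density, since $w\in L_1^N$ is strictly positive. In terms of $\mu$, the inequality reads
\begin{equation*}
 \biggl\|\Bigl(\frac1n\sum_{i=1}^n|P_wf_i|^{1/2}\Bigr)^{2}\biggr\|_{L_2(\mu)}\le B\,\biggl\|\frac1n\sum_{i=1}^n|f_i|\biggr\|_{L_2(\mu)} .
\end{equation*}
This is a vector-valued estimate from $L_2(\mu;\ell_1^n)$ into $L_2(\mu;\ell_{1/2}^n)$, with normalized $\ell_1$ and $\ell_{1/2}$ quasinorms. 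It is exactly the form of a Bourgain--Davis mixed-norm inequality for a martingale transform acting coordinatewise, with constant $C_{1/2}$. Since the inner product \eqref{eq:inner-product} is just $\langle f,g\rangle_w=\int fg\,d\mu$, the operator $P_w$ is the $L_2(\mu)$-orthogonal projection of the finite-dimensional space $L_1^N$ (all $\mathcal{F}_N$-measurable functions) onto $Y^{\perp_\mu}$. The task therefore reduces to exhibiting a filtration $(\mathcal{G}_k)$ on $([0,1),\mu)$ whose martingale differences split $L_1^N$ so that $Y$ is spanned by some of the difference spaces and $Y^{\perp_\mu}$ by the rest. Then $P_w$ is a transform by $0/1$ multipliers.

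To build this filtration, I will Gram--Schmidt orthonormalize, in $L_2(\mu)$, the system $\{h_I:I\in\mathcal{A}\}$ followed by the remaining basis vectors $\mathbf{1}_{[0,1)}$ and $h_J$, $J\in\mathcal{D}_N\setminus\mathcal{A}$. The order is chosen so that the partial spans are exactly the $L_2(\mu)$-spaces of $\mathcal{G}_k$-measurable functions for a refining sequence of finite partitions. The natural attempt goes from the bottom of the tree upward. Fix a maximal $I\in\mathcal{A}$. The span of $\{h_J : J\subseteq I,\ J\in\mathcal{A}\}$ together with the constants on the atoms below should be representable as the space of $\mu$-mean-zero functions measurable with respect to a partition of $I$. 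Adding the indicator pieces $\mathbf{1}_I$ then completes it to a $\sigma$-algebra step. Each Haar function differs from its $\mu$-mean-zero version only by a multiple of $\mathbf{1}_I$, which is measurable at the coarser level; this is what should make the difference spaces line up. Concretely, I would try to order the generators so that a linear combination of $h_I$ and $\mathbf{1}_I$ becomes a $\mu$-martingale difference at the step where $I$ splits. Elements of $Y$ should then consist exactly of differences with index in a subset $S$, modulo this adjustment.

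The main obstacle is precisely this identification: the $h_I$ are \emph{not} $\mu$-martingale differences for the dyadic filtration, because $\int_I h_I\,d\mu\neq0$ in general. So one must either use a non-dyadic (reordered) filtration or pass to the dual picture. If the direct construction fails, my fallback is duality. The condition $f\in Y^{\perp_\mu}$ means that $f/w$ has vanishing Lebesgue Haar coefficients on $\mathcal{A}$, and $P_w$ is self-adjoint in $L_2(\mu)$. I would then express $P_w$ through Lebesgue Haar projections conjugated by multiplication by $w$, and look for a filtration in which these conjugated projections become martingale transforms.

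Once $P_w=\sum_{k}\theta_k\,\Delta_k^\mu$ with $\theta_k\in\{0,1\}$ is established, Theorem~\ref{thm:bourgain-davis} applies directly to $(f_1,\dots,f_n)$ in $L_2(\mu)$. Squaring both sides yields \eqref{eq:weighted} with $B=C_{1/2}$.
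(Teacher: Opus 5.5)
Your reduction in the second paragraph cannot be carried out for any filtration on $([0,1),\mu)$, dyadic or not. Every transform $T$ of the form in Theorem~\ref{thm:bourgain-davis} fixes constants. Indeed, $T\mathbf{1}_{[0,1)}=\mathbb{E}_0\mathbf{1}_{[0,1)}=\mathbf{1}_{[0,1)}$, because all martingale increments of a constant vanish; even a $0/1$ multiplier in front of $\mathbb{E}_0$ would only give $0$ or $\mathbf{1}_{[0,1)}$. But $P_w\mathbf{1}_{[0,1)}$ is in general not constant. Take $N=0$, $\mathcal{A}=\{[0,1)\}$ and $w=w_+\mathbf{1}_{[0,1/2)}+w_-\mathbf{1}_{[1/2,1)}$ with $w_+\neq w_-$. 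Then $Y^{\perp_w}=\operatorname{span}\{w\}$, since $\int w\,h_{[0,1)}/w=0$. Hence $P_wf=(\int f/\int w)\,w$ and $P_w\mathbf{1}_{[0,1)}=2w/(w_++w_-)$.

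So $P_w$ is not a martingale transform on $([0,1),\mu)$, and the Gram--Schmidt construction you sketch must break down. Replacing $h_I$ by its $\mu$-mean-zero version $h_I-c\,\mathbf{1}_I$ changes the span, so the resulting kernel is no longer $Y$. Your final step, applying Theorem~\ref{thm:bourgain-davis} directly to $f_1,\dots,f_n$ in $L_2(\mu)$, therefore has no martingale transform to apply to. The fallback does not repair this as stated. Conjugating the Lebesgue Haar projection $Q$ by $w$, i.e.\ $f\mapsto wQ(f/w)$, gives the projection onto $Y^{\perp_w}$ along $wY$, not along $Y$. In the example it sends $h_{[0,1)}$ to a nonzero multiple of $w$, while $P_wh_{[0,1)}=0$. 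You also do not say how the measure must change so that the weight $1/w$ reappears on both sides.

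The missing idea is a simultaneous conjugation and change of measure by a suitable positive function. The obstruction above shows that the conjugating function $b$ must satisfy $P_wb=b$, so that $g\mapsto b^{-1}P_w(bg)$ fixes constants. The paper takes $b=P_w\mathbf{1}_{[0,1)}$ and works through the recursion for $b_I=\mathbf{1}_I-P_{Y_I}\mathbf{1}_I$: on $I^{\pm}$, $b_I$ is a positive multiple of $b_{I^\pm}$. This recursion gives two facts: $b>0$, and the functions $k_I=b_{I^+}-b_{I^-}$, $I\in\mathcal{A}$, form a $w$-orthogonal basis of $Y$.

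Since $b\perp_w Y$, each $k_I/b$ is then a multiple of the Haar function $h_I^\nu$ for $d\nu=(b^2/w)\,dt$. Hence $P_wf=b\,T_\nu(f/b)$, where $T_\nu$ is a $0/1$ transform for the ordinary dyadic filtration under $\nu$. Apply Theorem~\ref{thm:bourgain-davis} to $f_i/b$ under $\nu$. Both sides of \eqref{eq:weighted} are pointwise homogeneous of degree $2$ in the functions, so the factors $b^{2}$ cancel and the weight $1/w$ reappears on both sides. Positivity of $b$ is essential, both for $\nu$ to be a measure and for the conjugation to make sense. It is not automatic; it is exactly what the recursion for $b_I$ supplies.
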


\begin{proof}
  For a dyadic interval $I\in \mathcal{D}_{N+1}$, let $Y_I = \operatorname{span}\{ h_K : K\in \mathcal{A},\ K\subset I \}$.
  Moreover, let
  \begin{equation*}
    b_I = \mathbf{1}_I - P_{Y_I} \mathbf{1}_I,
  \end{equation*}
  where $P_{Y_I}$ denotes the orthogonal projection onto~$Y_I$ with respect
  to the inner product defined in~\eqref{eq:inner-product}.
  Thus, $b_I - \mathbf{1}_I \in Y_I$ and $\langle b_I, y \rangle_w = 0$ for all $y\in Y_I$.
  At level $N+1$, where $|I|=2^{-(N+1)}$, no Haar function $h_K$ with $K\in \mathcal{A}$ is supported on $I$. Thus $Y_I=\{0\}$ and $b_I=\mathbf{1}_I$.

  For $I\in \mathcal{D}_N$, we claim that
  \begin{alignat}{2}
    b_I &= b_{I^+}+b_{I^-}
        &\qquad&\text{if }I\in \mathcal{D}_N\setminus \mathcal{A},\label{eq:b-A-compl}\\
    b_I &= \frac{2\beta}{\alpha+\beta}b_{I^+}
           +\frac{2\alpha}{\alpha+\beta}b_{I^-}
        &\qquad&\text{if }I\in\mathcal{A},\label{eq:b-A}
  \end{alignat}
  where $\alpha = \|b_{I^+}\|_w^2$ and $\beta = \|b_{I^-}\|_w^2$.
  This is proved by induction, starting at level~$N$ and using the base case at level~$N+1$.
  In particular, we will show that $b_I > 0$ on $I$ (which is clear at level~$N+1$).
  Hence, fix $I\in \mathcal{D}_N$ and assume that $b_{I^+} > 0$ on $I^+$ and $b_{I^-} > 0$ on $I^-$, so $\alpha,\beta > 0$.
  
  If $I\notin \mathcal{A}$, then $Y_I = Y_{I^+}\oplus_w Y_{I^-}$, and the summands are orthogonal under~\eqref{eq:inner-product} because their supports are disjoint.
  Thus, we obtain~\eqref{eq:b-A-compl}:
  \begin{equation*}
    b_I=(\mathbf{1}_{I^+}+\mathbf{1}_{I^-})-(P_{Y_{I^+}}\mathbf{1}_{I^+}+P_{Y_{I^-}}\mathbf{1}_{I^-}) =b_{I^+}+b_{I^-}.
  \end{equation*}
  
  If $I\in \mathcal{A}$, put $D_I = Y_{I^+} \oplus_w Y_{I^-}$ and $k_I=h_I-P_{D_I}h_I$. By linearity,
  \begin{equation}\label{eq:kI-A}
    k_I
 =(\mathbf{1}_{I^+}-P_{Y_{I^+}}\mathbf{1}_{I^+})-(\mathbf{1}_{I^-}-P_{Y_{I^-}}\mathbf{1}_{I^-})=b_{I^+}-b_{I^-}.
\end{equation}
Since $h_I-k_I\in D_I$ and $k_I\perp_w D_I$, we can write $Y_I=D_I\oplus_w\operatorname{span}\{k_I\}$.
Thus, to compute~$b_I$, we can start with $\mathbf{1}_I$, subtract its orthogonal projection onto $D_I$, leaving $b_{I^+} + b_{I^-}$, and then subtract its orthogonal projection onto the one-dimensional space $\operatorname{span}\{ k_I \}$. Since $b_{I^+}$ and~$b_{I^-}$ have disjoint supports, we see that 
$\langle b_{I^+}+b_{I^-},k_I\rangle_w=\alpha-\beta$ and $\|k_I\|_w^2=\alpha+\beta$. Hence
\begin{equation*}
 b_I=b_{I^+}+b_{I^-}-\frac{\alpha-\beta}{\alpha+\beta}(b_{I^+}-b_{I^-}),
\end{equation*}
which completes the proof of~\eqref{eq:b-A}.

In both cases, $b_I$ is strictly positive on $I$ because its restrictions to $I^+$ and $I^-$ are positive multiples of $b_{I^+}$ and $b_{I^-}$, respectively.
Moreover, it follows by induction that for each $I\in \mathcal{D}_{N+1}$, $(k_J : J\in \mathcal{A},\ J\subset I)$ is an orthogonal basis of~$Y_I$:
At level $N + 1$, the space is zero and the family is empty. For $I\in \mathcal{D}_N\setminus \mathcal{A}$, combine the bases of $Y_{I^+}$ and $Y_{I^-}$. For $I\in \mathcal{A}$, this follows from the above orthogonal decomposition
\begin{equation*}
  Y_I = D_I \oplus_w \operatorname{span}\{ k_I \} = Y_{I^+}\oplus_w Y_{I^-}\oplus_w \operatorname{span}\{ k_I \}.
\end{equation*}
In particular, for $I = [0,1)$, we obtain that $(k_J)_{J\in \mathcal{A}}$ is an orthogonal basis of $Y$.

Now put $b=b_{[0,1)}=P_w\mathbf{1}_{[0,1)}$. By the above observation, the restriction~$b|_I$ is a positive scalar multiple of $b_I$ for every $I\in \mathcal{D}_{N+1}$.
Define the positive finite measure
\begin{equation*}
 d\nu=(b^2/w)\,dt.
\end{equation*}

For each interval $I\in \mathcal{A}$, we claim that the function $k_I/b$ is a nonzero
multiple of the Haar function at $I$ for the measure $\nu$, which is defined as
\begin{equation*}
 h_I^\nu=\frac{\mathbf{1}_{I^+}}{\nu(I^+)}-\frac{\mathbf{1}_{I^-}}{\nu(I^-)}.
\end{equation*}
Indeed, write $b|_{I^+}=t_{I^+}b_{I^+}$ and $b|_{I^-}=t_{I^-}b_{I^-}$, with $t_{I^+},t_{I^-}>0$.
Then, by~\eqref{eq:kI-A},
\begin{equation*}
 \frac{k_I}{b}=\frac{\mathbf{1}_{I^+}}{t_{I^+}}-\frac{\mathbf{1}_{I^-}}{t_{I^-}}.
\end{equation*}
Since $b\perp_w Y$ and $k_I\in Y$, we have
$\int(k_I/b)\,d\nu=\int k_Ib/w=0$. Its zero integral implies that $k_I/b$ is indeed a nonzero multiple of $h_I^{\nu}$.

Together with $\mathbf{1}_{[0,1)}$, the functions $h_I^{\nu}$, $I\in \mathcal{D}_N$, form an orthogonal basis of $L_1^N$ equipped with the inner product $\int gh\,d\nu$.
Let $T_\nu$ be the projection that maps $h_I^\nu$ to zero for $I\in\mathcal{A}$ and fixes the remaining basis functions, including $\mathbf{1}_{[0,1)}$.
Thus, $T_\nu$ is the orthogonal projection onto $(Y^\nu)^\perp$, where
\begin{equation*}
 Y^\nu=\operatorname{span}\{h_I^\nu:I\in\mathcal{A}\}.
\end{equation*}
Since $b>0$ and $b\in L_1^N$, multiplication by $b$ defines an invertible map $U:L_1^N\to L_1^N$, $Ug=bg$, satisfying
\begin{equation*}
 \langle Ug,Uh\rangle_w=\int b^2gh/w=\int gh\,d\nu.
\end{equation*}
We have shown that $(k_I)_{I\in\mathcal{A}}$ is a basis of $Y$ and $k_I/b$ is a nonzero multiple of $h_I^\nu$.
Hence, $U^{-1}(Y)=Y^\nu$ and, since $U$ preserves the inner products, $U^{-1}(Y^{\perp_w})=(Y^\nu)^\perp$.
It follows that $U^{-1}P_wU=T_\nu$, or equivalently,
\begin{equation}\label{eq:conjugate}
 P_wf=b\,T_\nu(f/b),\qquad f\in L_1^N.
\end{equation}
Note that $T_\nu$ is precisely a martingale transform for the standard dyadic filtration, with measure~$\nu/\nu([0,1])$ and predictable multipliers taking values in $\{0,1\}$.
Apply Theorem~\ref{thm:bourgain-davis} with $p=1/2$ to the functions $f_i/b$ and the transform $T_\nu$.
Substituting~\eqref{eq:conjugate} and $d\nu=(b^2/w)\,dt$ cancels the factors $b^2$ on both sides and gives exactly~\eqref{eq:weighted}.
\end{proof}

\section{Proof of the main result}
\label{sec:proof-of-main-result}

We will now prove Theorem~\ref{thm:main} by combining Proposition~\ref{prop:weighted} with Khintchine's inequality and standard inequalities.

\begin{proof}[Proof of Theorem~\ref{thm:main}]
Let $x_1,\dots,x_n\in L_1^N$. For every sign vector $\varepsilon\in \{ -1,1 \}^n$, choose an element $y_{\varepsilon}\in Y$ that minimizes $\|\sum_{j=1}^n \varepsilon_j x_j + y_\varepsilon\|_{L_1}$. Hence, the function
\begin{equation*}
  F_\varepsilon=\sum_{j=1}^n\varepsilon_jx_j+y_\varepsilon
  \qquad
  \text{satisfies}
 \qquad \|F_\varepsilon\|_{L_1}=\biggl\|\sum_{j=1}^n\varepsilon_jx_j+Y\biggr\|_{L_1/Y}.
\end{equation*}
For a fixed real number $\eta > 0$, set
\begin{equation}\label{eq:A-w}
 A=\mathbb{E}_\varepsilon\|F_\varepsilon\|_{L_1}
 \qquad
 \text{and}
 \qquad
 w=\mathbb{E}_\varepsilon|F_\varepsilon|+\eta.
\end{equation}
In particular, $\int w=A+\eta$ and $w>0$. Define the
representatives $z_j=P_wx_j$ for $j=1,\dots,n$. By~\eqref{eq:P},
$P_w$ eliminates all independently chosen vectors $y_\varepsilon$, and we get
\begin{equation*}
  z_j-x_j\in Y
  \qquad
  \text{and}
  \qquad
 P_wF_\varepsilon=\sum_{j=1}^n\varepsilon_jz_j.
\end{equation*}
Proposition~\ref{prop:weighted} applied to the functions~$F_\varepsilon$, $\varepsilon\in \{ -1,1 \}^n$, gives
\begin{equation}\label{eq:bourgain-davis-consequence}
 \int\biggl(\mathbb{E}_\varepsilon
              \Bigl|\sum_{j=1}^n\varepsilon_jz_j\Bigr|^{1/2}\biggr)^4\,\frac{dt}{w}
 \leq B^2\int\bigl(\mathbb{E}_\varepsilon|F_\varepsilon|\bigr)^2\,\frac{dt}{w}
 \leq B^2 A,
\end{equation}
where the last inequality follows from~\eqref{eq:A-w}.
Put $g(t)=\mathbb{E}_\varepsilon|\sum_{j=1}^n\varepsilon_jz_j(t)|^{1/2}$ and apply the Cauchy--Schwarz inequality to $g^2/\sqrt w$ and~$\sqrt w$.
Together with~\eqref{eq:bourgain-davis-consequence} and $\int w = A + \eta$, we obtain
\begin{equation}\label{eq:cauchy-schwarz-estimate}
 \int\biggl(\mathbb{E}_\varepsilon
 \Bigl|\sum_{j=1}^n\varepsilon_jz_j\Bigr|^{1/2}\biggr)^2
 \leq \left(\int g^4/w\right)^{1/2}\left(\int w\right)^{1/2}
 \leq B\sqrt{A(A+\eta)}.
\end{equation}

Finally, using $z_j\in x_j + Y$, Minkowski's integral inequality, Khintchine's inequality (applied pointwise with $p = 1/2$), and~\eqref{eq:cauchy-schwarz-estimate}, we obtain
\begin{align*}
 \biggl(\sum_{j=1}^n \left\|x_j+Y\right\|_{L_1/Y}^2\biggr)^{1/2}
 &\leq\biggl(\sum_{j=1}^n\|z_j\|_1^2\biggr)^{1/2}\\
 &\leq\int\biggl(\sum_{j=1}^n|z_j(t)|^2\biggr)^{1/2}\,dt\\
 &\leq K_{1/2} B\sqrt{A(A+\eta)},
\end{align*}
where $K_{1/2}$ denotes the constant in Khintchine's inequality for $p = 1/2$.
Letting $\eta\downarrow0$ proves~\eqref{eq:main} with constant $C=K_{1/2} B$.
\end{proof}

\section{The Bourgain--Davis inequality}

We present an elementary proof of the martingale transform inequality by Bourgain and Davis~\cite{MR825718} (see also \cite[Section~6.8]{MR3617459}). This proof only uses Burkholder's weak type $(1,1)$ inequality for martingale transforms and standard results from martingale theory, such as Doob's $L_2$ maximal inequality.

Let $(\mathcal F_k)_{k=0}^m$ be a filtration on a probability space $\Omega$
with probability measure $\mathbb{P}$. Assume that $\mathcal F_0$ is trivial
and that each $\mathcal F_k$ is generated by a finite partition of the space into
atoms of positive probability. Write $\mathbb{E}$ for expectation and
$\mathbb{E}_k=\mathbb{E}(\,\cdot\mid\mathcal F_k)$ for conditional expectation.

\begin{theorem}[Bourgain--Davis]\label{thm:bourgain-davis}
Let $0<p<1$, and let $a_1,\ldots,a_m$ be real-valued functions
such that $a_k$ is $\mathcal F_{k-1}$-measurable and $|a_k|\leq1$ for all $k$.
For $f_1,\ldots,f_N\in L_1(\Omega)$, define the martingale transform $T$ with multipliers $(a_k)_{k=1}^m$ by
\begin{equation*}
 Tf_i=\mathbb{E}_0f_i+\sum_{k=1}^m a_k(\mathbb{E}_kf_i-\mathbb{E}_{k-1}f_i).
\end{equation*}
Then there is a constant $C_p$, depending only on $p$, such that
\begin{equation}\label{eq:goal}
 \mathbb{E}\left(\frac1N\sum_{i=1}^N|Tf_i|^p\right)^{2/p}
 \leq C_p^2\, \mathbb{E}\left(\frac1N\sum_{i=1}^N|f_i|\right)^2.
\end{equation}
\end{theorem}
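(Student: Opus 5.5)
The approach is a stopping-time argument driven by the maximal function of the averaged input, combined with Burkholder's weak type $(1,1)$ inequality applied in its good-$\lambda$ / layer-cake form, and Doob's $L_2$ maximal inequality to close. Write $F=\frac1N\sum_i|f_i|$ and $G=\bigl(\frac1N\sum_i|Tf_i|^p\bigr)^{1/p}$; we must show $\mathbb{E}G^2\le C_p^2\,\mathbb{E}F^2$. Let $F^*=\max_k\mathbb{E}_kF$, so that $\mathbb{E}(F^*)^2\le 4\,\mathbb{E}F^2$ by Doob. It therefore suffices to prove the distributional estimate
\begin{equation*}
\mathbb{P}(G>\lambda)\le \frac{c_p}{\lambda}\,\mathbb{E}\bigl(F\mathbf 1_{\{F^*>\lambda/2\}}\bigr)
\end{equation*}
or a variant of it. Integrating $2\lambda\,d\lambda$ then gives $\mathbb{E}G^2\le 4c_p\,\mathbb{E}(F\,F^*)\le 4c_p\|F\|_2\|F^*\|_2\le 8c_p\,\mathbb{E}F^2$.

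To get the distributional estimate, fix $\lambda$ and introduce the stopping time $\tau=\min\{k:\mathbb{E}_kF>\lambda/2\}$, setting $\tau=\infty$ if no such $k$ exists. Each atom of $\mathcal F_k$ is either stopped or not, so $\{\tau<\infty\}=\{F^*>\lambda/2\}$. On $\{\tau=\infty\}$, each $Tf_i$ agrees with the transform of the stopped function $f_i^{\tau}$, where $f_i^\tau$ is built from the martingale stopped at $\tau$ and its last value $\mathbb{E}_\tau f_i$ is replaced on stopped atoms. (Alternatively, split $f_i=f_i\mathbf 1_{\{\tau=\infty\}}+f_i\mathbf 1_{\{\tau<\infty\}}$, but the stopped martingale is cleaner.)
\begin{itemize}
\item The good part is handled in $L_2$. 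Averaged over $i$, the stopped functions satisfy $\frac1N\sum_i|f_i^\tau|\lesssim\lambda$ pointwise, up to a doubling-type issue at the stopping atom, which I will deal with by truncating $\mathbb{E}_\tau$ at the previous level (whose value is $\le\lambda/2$). Then, using $p<1$ and concavity, $G^p\le\frac1N\sum|Tf_i|^p$, and I will use the pointwise bound $|x|^p\le \lambda^{p-1}|x|$ on the set where it matters.
\item The bad part $\{\tau<\infty\}$ has measure $\le\frac{2}{\lambda}\mathbb{E}(F\mathbf 1_{\{F^*>\lambda/2\}})$ by the stopping definition, which is already of the required form.
\end{itemize}

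The main obstacle is the good part, because each individual $f_i$ is not controlled; only their average is. The key trick is to apply Burkholder's weak $(1,1)$ inequality to each $f_i$ separately, in its layer-cake form. For $p<1$,
\begin{equation*}
\mathbb{E}\bigl(|Tf|^p\wedge\lambda^p\bigr)\le c_p\lambda^{p-1}\,\mathbb{E}|f|,
\end{equation*}
obtained by integrating the weak $(1,1)$ tail bound $\mathbb{P}(|Tf|>s)\le C\|f\|_1/s$ against $ps^{p-1}\,ds$ on $[0,\lambda]$. Averaging over $i$ and applying this to the localized functions $f_i^\tau$, I get
\begin{equation*}
\mathbb{E}\Bigl(\frac1N\sum_i|Tf_i^\tau|^p\wedge\lambda^p\Bigr)\le c_p\lambda^{p-1}\,\mathbb{E}\Bigl(\frac1N\sum_i|f_i|\mathbf 1_{A}\Bigr),
\end{equation*}
where $A$ is the relevant set. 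Chebyshev's inequality then gives $\mathbb{P}(G>\lambda,\tau=\infty)\le\lambda^{-p}\cdot(\text{this})$. The delicate point is to localize so that the right-hand side involves only $F$ on $\{F^*>\lambda/4\}$ rather than all of $F$; otherwise integrating against $\lambda\,d\lambda$ diverges. If this localization fails, the fallback is a good-$\lambda$ comparison between $G$ and $F^*$: estimate $\mathbb{P}(G>2\lambda,\,F^*\le\delta\lambda)\le\epsilon\,\mathbb{P}(G^*>\lambda)$ using the same truncated weak-type bound restarted at the stopping time where $G$'s martingale analogue first exceeds $\lambda$. This yields $\|G\|_2\lesssim\|F^*\|_2\lesssim\|F\|_2$, since the square-function/maximal comparison in $L_2$ is harmless for $p<1$ after conditioning.
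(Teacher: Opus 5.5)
\textbf{There is a genuine gap.} Your primary route rests on the distributional estimate $\mathbb{P}(G>\lambda)\le\frac{c_p}{\lambda}\mathbb{E}(F\mathbf{1}_{\{F^*>\lambda/2\}})$. This estimate is false, and it stays false with $\lambda/2$ replaced by $\lambda/c$ for any $c$, so no finer localization will rescue it. Take $N=1$, the dyadic filtration on $[0,1)$, $f=\frac{\lambda}{8}\sum_{k=1}^m(-1)^k h_{[0,2^{1-k})}$ and $a_k=(-1)^k$. Then $|f|\le\lambda/4$ everywhere, so $F^*\le\lambda/4$ and the right-hand side vanishes. On the other hand, $Tf=\frac{\lambda}{8}\sum_{k=1}^m h_{[0,2^{1-k})}$ equals $m\lambda/8>\lambda$ on $[0,2^{-m})$ once $m\ge 9$.

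Since $T$ is unbounded on $L_\infty$, the good part can only be handled by an $L_2$-type bound. This is exactly where the difficulty you flag bites. Only the average $\frac1N\sum_i|f_i^\tau|$ is bounded by $\lambda$, so $\frac1N\sum_i\mathbb{E}|f_i^\tau|^2$ is not controlled uniformly in $N$. An $L_2$ bound for the averaged good part is essentially the theorem itself.

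Your ``key trick'' is also wrong as stated. Integrating $ps^{p-1}\cdot C\|f\|_1/s$ over $[0,\lambda]$ diverges at $0$, because $p-2<-1$. In fact $\mathbb{E}(|Tf|^p\wedge\lambda^p)\le c_p\lambda^{p-1}\mathbb{E}|f|$ fails for $f\equiv\delta$ as $\delta\to 0$. What weak type $(1,1)$ actually gives is one of two things:
\begin{itemize}
\item the tail bound $\mathbb{E}(|Tf|^p\mathbf{1}_{\{|Tf|>\lambda\}})\le c_p\lambda^{p-1}\mathbb{E}|f|$, which yields only the non-localized $\mathbb{P}(G>\lambda)\lesssim_p\lambda^{-1}\mathbb{E}F$ and does not integrate to an $L_2$ estimate; or
\item the moment bound $\mathbb{E}|Tf|^p\le K_p(\mathbb{E}|f|)^p$.
\end{itemize}

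Your fallback is the right mechanism: stop when the martingale analogue of $G$ first exceeds $\lambda$, then restart. In substance this is the paper's proof. However, the two estimates that make it work are missing. Write $G_k=\frac1N\sum_i|g_{i,k}|^p$, where $g_{i,k}$ is the transform of $\mathbb{E}_kf_i$. Let $F_k=\mathbb{E}_kF$, and let $\sigma$ be the first $k$ with $G_k>\lambda$.

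(i) You must control the overshoot at $\sigma$. Using $|a_k|\le1$, $p$-subadditivity and concavity of $x\mapsto x^p$, one gets $G_k-G_{k-1}\le\frac1N\sum_i|f_{i,k}-f_{i,k-1}|^p\le(F_k+F_{k-1})^p$. Hence $G_\sigma\le\lambda+2^p(F^*)^p$. Without this, $G$ may jump far past $\lambda$ at $\sigma$, and your good-$\lambda$ inequality has no starting point.

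(ii) You must control the growth after $\sigma$. Set $Z_s=\frac1N\sum_i\max_{k\ge s}|g_{i,k}-g_{i,s}|^p$; one needs $\mathbb{E}_sZ_s\le B_pF_s^p$. This comes from applying Burkholder's inequality on each atom of $\mathcal F_s$ in the moment form $\mathbb{E}_s\max_{k\ge s}|g_{i,k}-g_{i,s}|^p\le K_p(2\mathbb{E}_s|f_i|)^p$, followed by $\frac1N\sum_i(\mathbb{E}_s|f_i|)^p\le F_s^p$. This concavity step, not a localization, is what overcomes the fact that only the average of the $f_i$ is controlled.

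The paper combines (i) and (ii) into $\mathbb{E}(G^*-\lambda)_+\le D_p\mathbb{E}[\mathbf{1}_{\{G^*>\lambda\}}(F^*)^p]$. It then integrates against $r(r-1)\lambda^{r-2}$ with $r=2/p$ and finishes with H\"older and Doob. A good-$\lambda$ version built on (i) and (ii) would also work. Your proposal, however, establishes neither estimate, and the closing remark about square-function/maximal comparisons does not substitute for them.
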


The conclusion can be expressed in mixed-norm notation. To this end, give
$S=\{1,\ldots,N\}$ the uniform probability measure and set
$h(t,i)=f_i(t)$ for $t \in \Omega$ and $i\in S$. Let the operator $T$
act only in the variable $t$, so $(Th)(t,i)=(Tf_i)(t)$.
Then~\eqref{eq:goal} is equivalent to
\begin{equation*}
 \|Th\|_{L_2(\Omega, L_p(S))}
 \leq C_p\|h\|_{L_2(\Omega, L_1(S))}.
\end{equation*}

We will use Burkholder's weak type $(1,1)$ inequality for martingale
transforms, as stated in~\cite[Section~5.1, inequality (5.2)]{MR3617459}:
If $(u_k)_{k=0}^{m}$ is a real martingale with $u_0=0$ and
$v_k=\sum_{l=1}^k b_l(u_l-u_{l-1})$, where $(b_l)$ is predictable
and $|b_l|\leq1$, then
\begin{equation*}
 \mathbb{P}\{v^*>t\}\leq\frac{C}{t}\mathbb{E}|u_m|\quad(t>0),
 \qquad v^*=\max_{0\leq k\leq m}|v_k|,
\end{equation*}
with an absolute constant $C$. Consequently, for a constant $K_p$
depending only on $p$,
\begin{equation}\label{eq:moment}
 \mathbb{E}(v^*)^p\leq K_p(\mathbb{E}|u_m|)^p.
\end{equation}

\begin{proof}[Proof of Theorem~\ref{thm:bourgain-davis}]
For $1\leq i\leq N$ and $0\leq k\leq m$, put
\begin{equation*}
 f_{i,k}=\mathbb{E}_{k}f_i,\qquad
 g_{i,k}=f_{i,0}+\sum_{l=1}^k a_l(f_{i,l}-f_{i,l-1}),
\end{equation*}
and define
\begin{equation*}
 F=\frac1N\sum_{i=1}^N|f_i|,\qquad
 F_k=\mathbb{E}_{k}F,\qquad F^*=\max_{0\leq k\leq m}F_k,
\end{equation*}
\begin{equation*}
 G_k=\frac1N\sum_{i=1}^N|g_{i,k}|^p,\qquad
 G^*=\max_{0\leq k\leq m}G_k.
\end{equation*}
It suffices to prove $\mathbb{E}(G^*)^{2/p}\le C_p \mathbb{E} F^2$, since $g_{i,m}=Tf_i$ for all~$i$.

\medskip\noindent\textit{Step 1: estimate the transforms after a fixed time.}
For $0\leq s\leq m$, set
\begin{equation*}
 Z_s=\frac1N\sum_{i=1}^N\max_{s\leq k\leq m}|g_{i,k}-g_{i,s}|^p.
\end{equation*}
We claim that, for a constant $B_p$ depending only on $p$,
\begin{equation}\label{eq:conditional}
 \mathbb{E}_{s} Z_s\leq B_pF_s^p.
\end{equation}
On an atom $A$ of $\mathcal F_s$, use the probability space
$(A,\mathcal F|_A,\mathbb{P}_A)$, where $\mathbb{P}_A(E)=\mathbb{P}(E)/\mathbb{P}(A)$ for
measurable $E\subseteq A$, with filtration
$(\mathcal F_{s+l}|_A)_{l=0}^{m-s}$. Expectation under $\mathbb{P}_A$
is the value of $\mathbb{E}_{s}$ on~$A$.
We will apply~\eqref{eq:moment} on this probability space to the martingale
$(f_{i,s+l}-f_{i,s})_{l=0}^{m-s}$ restricted to $A$. Its transform with
respect to the predictable process
$(a_{s+l}|_A)_{l=1}^{m-s}$ is
$(g_{i,s+l}-g_{i,s})_{l=0}^{m-s}$ restricted to $A$.
Therefore, using~\eqref{eq:moment} and the inequality
\begin{equation*}
 \mathbb{E}_{s}|f_i-f_{i,s}|
 \leq\mathbb{E}_{s}|f_i|+|f_{i,s}|
 \leq2\mathbb{E}_{s}|f_i|,
\end{equation*}
we obtain
\begin{equation*}
 \mathbb{E}_s\max_{s\leq k\leq m}|g_{i,k}-g_{i,s}|^p
 \leq K_p(\mathbb{E}_s|f_i-f_{i,s}|)^p
 \leq 2^pK_p(\mathbb{E}_s|f_i|)^p,
 \qquad 1\leq i\leq N,
\end{equation*}
where $K_p$ depends only on $p$.
Averaging these estimates gives
\begin{align*}
 \mathbb{E}_{s} Z_s
 &\leq 2^pK_p\frac1N\sum_{i=1}^N(\mathbb{E}_{s}|f_i|)^p\\
 &\leq 2^pK_p\left(\frac1N\sum_{i=1}^N\mathbb{E}_{s}|f_i|\right)^p
 =2^pK_pF_s^p.
\end{align*}
This proves~\eqref{eq:conditional}.

\medskip\noindent\textit{Step 2: a threshold estimate for $G^{*}$.}
Fix $\lambda>0$ and define the stopping time $\sigma(t)$ to be the smallest $k\in \{ 0,\dots,m \}$ such that $G_k(t) > \lambda$, or $\infty$ if no such $k$ exists. Then $\{\sigma\leq m\}=\{G^*>\lambda\}$. For $1\leq k\leq m$, using $|a_k|\leq1$ and $|f_{i,k}|\le \mathbb{E}_k|f_i|$,  we obtain
\begin{align*}
 G_k
 &\leq G_{k-1}+\frac1N\sum_{i=1}^N|g_{i,k}-g_{i,k-1}|^p\\
 &\leq G_{k-1}
       +\left(\frac1N\sum_{i=1}^N|f_{i,k}-f_{i,k-1}|\right)^p\\
 &\leq G_{k-1}+(F_k+F_{k-1})^p.
\end{align*}
Hence, on $\{1\leq\sigma\leq m\}$, we have
\begin{equation}\label{eq:1}
  G_\sigma\leq \lambda+2^p(F^*)^p.
\end{equation}
The same bound holds on $\{ \sigma=0 \}$,
because $G_0\leq F_0^p\le (F^{*})^p$ by the concavity of $x\mapsto x^p$ and $|f_{i,0}|\le \mathbb{E}_0|f_i|$.

Now let $0\le k\le m$. On the set $\{ \sigma > k \}$, we have $G_k\leq \lambda$, and on $\{ \sigma\le k \}$, subadditivity of $x\mapsto x^p$ and~\eqref{eq:1} give $G_k\leq G_\sigma+Z_\sigma \le \lambda + 2^p(F^{*})^p + Z_\sigma$. 
Consequently,
\begin{equation}\label{eq:2}
 (G^*-\lambda)_+
 \leq\mathbf{1}_{\{\sigma\leq m\}}\bigl(2^p(F^*)^p+Z_\sigma\bigr),
\end{equation}
where $x_+=\max\{x,0\}$. Here $Z_\sigma$ is set equal to zero when $\sigma=\infty$.
Using~\eqref{eq:conditional}, we get
\begin{align*}
 \mathbb{E}[\mathbf{1}_{\{\sigma\leq m\}}Z_\sigma]
 &=\sum_{s=0}^m\mathbb{E}\mathbb{E}_s[\mathbf{1}_{\{\sigma=s\}} Z_s]
 =\sum_{s=0}^m\mathbb{E}[\mathbf{1}_{\{\sigma=s\}}\mathbb{E}_s Z_s]\\
 &\leq B_p\sum_{s=0}^m\mathbb{E}[\mathbf{1}_{\{\sigma=s\}}F_s^p]
   \leq B_p\mathbb{E}[\mathbf{1}_{\{G^*>\lambda\}}(F^*)^p].
\end{align*}
Thus, with $D_p=2^p + B_p$ and $\{ \sigma \le m \} = \{ G^{*} > \lambda \}$, from~\eqref{eq:2} we obtain
\begin{equation}\label{eq:excess}
 \mathbb{E}(G^*-\lambda)_+
 \leq D_p\mathbb{E}[\mathbf{1}_{\{G^*>\lambda\}}(F^*)^p].
\end{equation}

\medskip\noindent\textit{Step 3: integrate the estimate.}
Put $r=2/p$. Multiply~\eqref{eq:excess} by $r(r-1)\lambda^{r-2}$ and
integrate over $\lambda>0$. For every $h\geq0$,
\begin{align*}
 r(r-1)\int_0^\infty \lambda^{r-2}(h-\lambda)_+\,d\lambda&=h^r,\\
 r(r-1)\int_0^\infty \lambda^{r-2}\mathbf{1}_{\{h>\lambda\}}\,d\lambda&=rh^{r-1}.
\end{align*}
It follows that
\begin{align*}
 \mathbb{E}(G^*)^r
 &\leq rD_p\mathbb{E}[(F^*)^p(G^*)^{r-1}]\\
 &\leq rD_p\bigl(\mathbb{E}(F^*)^2\bigr)^{1/r}
                \bigl(\mathbb{E}(G^*)^r\bigr)^{(r-1)/r},
\end{align*}
where the last step holds by Hölder's inequality with exponents $r$ and $r/(r-1)$.
If $\mathbb{E}(G^*)^r=0$, the conclusion is immediate. Otherwise division
and raising to the power $r$ gives
\begin{equation*}
 \mathbb{E}(G^*)^{2/p}\leq(rD_p)^{2/p}\mathbb{E}(F^*)^2.
\end{equation*}
Doob's $L_2$ maximal inequality for the nonnegative martingale
$(F_k)$ gives $\mathbb{E}(F^*)^2\leq4\mathbb{E} F^2$. This proves~\eqref{eq:goal}
with a constant depending only on $p$.
\end{proof}

\bibliographystyle{abbrv}%
\bibliography{bibliography}%

\end{document}